\newtheorem{theorem}{Theorem}
\newtheorem{lemma}[theorem]{Lemma}
\newtheorem{remark}[theorem]{Remark}
\title{Notes on the Finiteness of Powers of Two with All Even Digits}
\author{Bogdan C. Dumitru
  \thanks{Faculty of Mathematics and Computer Science, University of Bucharest.
    \texttt{bogdan.dumitru@fmi.unibuc.ro}}}
\date{July~2025}
\begin{document}

\maketitle

\begin{abstract}
We study the problem of finding positive integers $n$ such that all the decimal digits of $2^n$ are even, i.e., belong to $\{0, 2, 4, 6, 8\}$.
Computational checks up to $n = 10^{15}$ reveal the known cases $n = 1, 2, 3, 6, 11$ and no additional instances.
We present a self-contained argument, based on a dynamical Borel-Cantelli lemma, that establishes a metric result related to this problem. We show that the set of "initial phases" in a corresponding dynamical system that would generate infinitely many such powers is of Lebesgue measure zero, providing strong probabilistic support for the finiteness conjecture.
\end{abstract}

\vspace{1em}
\textbf{Keywords:} powers of two, decimal digits, even digits, shrinking targets, diophantine approximation, metric number theory


\section{Introduction}
The problem of finding powers of $2$ whose decimal digits are all even has recently sparked interest in recreational and computational number theory.
While perhaps not a central problem in number theory, the investigation into the finiteness of such integers leads to the application of techniques and tools from metric number theory that are applicable to other digit-based problems.

This paper is structured as follows: in Section~\ref{sec:problem-statement} we reformulate the problem in a more tractable way. Our main theorem and its proof, based on a shrinking-target estimate and the Borel--Cantelli lemma~\cite{Einsiedler2011,Cornfeld1982,Walters2000-lw}, appear in Section~\ref{sec:main}. Then, in Section~\ref{sec:bound} we give a brief heuristic. Section~\ref{sec:comp} records the computational verification. Finally, we conclude in Section~\ref{sec:conclusion}.


\section{Problem Reformulation}
\label{sec:problem-statement}
We approach the problem from the observation that $2^n = 2^{n-1} + 2^{n-1}$ for $n > 1$. For $2^n$ to have only even digits, the addition $2^{n-1} + 2^{n-1}$ must produce no carry. Consequently, all digits of $2^{n-1}$ must be strictly less than $5$. Hence, if $2^n$ has only even digits, then each decimal digit of $2^{n-1}$ lies in $\{0,1,2,3,4\}$ because any digit $\ge 5$ would generate a carry upon doubling.

A crucial step shows that as $d$, the number of digits of $2^k$, increases, the set of fractional parts $f \in [0,1)$ that force the first $d$ digits of $10^f$ to be $\le 4$ becomes so restrictive that its total measure (in $[0,1)$) decays exponentially like $\kappa^d$ for some fixed $\kappa < 1$. We refer to this as being exponentially small in $d$.


\section{Main Result}
\label{sec:main}

Before presenting the main result, we recall a key lemma based on the Borel-Cantelli principle adapted for dynamical systems \cite{Chernov2001}:

\begin{lemma}
\label{lemma:bc}
Let $T: X \to X$ be a measure-preserving transformation on a probability space $(X, m)$, where $m$ is a probability measure. If $(A_n)_{n=1}^\infty$ is a sequence of measurable sets satisfying $\sum_{n=1}^\infty m(A_n) < \infty$, then for $m$-almost every $x \in X$, the set $\{ n \ge 1 : T^n x \in A_n \}$ is finite.
\end{lemma}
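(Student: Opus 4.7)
The plan is to reduce this to the classical first Borel--Cantelli lemma by rewriting the event $\{T^n x \in A_n\}$ as membership of $x$ itself in a suitable set, then summing measures.

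First, I would set
\[
  B_n \;=\; T^{-n}(A_n) \;=\; \{ x \in X : T^n x \in A_n \},
\]
which is measurable since $T$ is a measurable (in fact measure-preserving) transformation and $A_n$ is measurable. The event $\{n : T^n x \in A_n\}$ being infinite is then exactly the event $x \in \limsup_{n} B_n$, so it suffices to prove $m(\limsup_n B_n)=0$.

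The key step is to invoke measure-preservation of $T$ to get $m(B_n) = m(T^{-n}A_n) = m(A_n)$ for every $n$. Combined with the hypothesis $\sum_n m(A_n) < \infty$, this yields $\sum_n m(B_n) < \infty$. Applying the classical (non-dynamical) Borel--Cantelli lemma to the sequence $(B_n)$ on $(X,m)$ then gives $m(\limsup_n B_n) = 0$, which is exactly the desired statement.

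I do not expect any real obstacle: the argument is a one-line reduction once the preimages $B_n$ are introduced, because the only nontrivial input is $m(T^{-n}A_n)=m(A_n)$, which is literally the definition of measure-preservation. The content of the lemma is therefore purely that measure-preservation turns the ``moving target'' event $T^n x \in A_n$ into a ``fixed point, moving set'' event $x \in B_n$ with the same measure, at which point classical Borel--Cantelli applies verbatim.
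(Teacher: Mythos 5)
Your proposal is correct: the paper itself does not prove Lemma~\ref{lemma:bc} (it only cites it), and your reduction---setting $B_n = T^{-n}(A_n)$, using measure-preservation (iterated to give $m(T^{-n}A_n)=m(A_n)$ for every $n$), and applying the convergence half of the classical Borel--Cantelli lemma to conclude $m(\limsup_n B_n)=0$---is the standard, complete argument. The only cosmetic point is that $m(T^{-n}A_n)=m(A_n)$ is the definition of measure-preservation only for $n=1$ and requires a one-line induction for general $n$, which is trivial and does not affect correctness.
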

Now, let $2^k = 10^{\lfloor kc\rfloor}\cdot 10^{\{kc\}}$ so the base-10 digits of $2^k$ coincide with the first $d(k)=\lfloor kc\rfloor+1$ digits of $10^{\{kc\}}$ (up to a null set of endpoints). The main claim is a metric result concerning the reformulated problem (all digits of a number are $\le 4$).

\begin{theorem}
\label{thm:main}
Let $c = \log_{10}(2)$. Let $d(k) = \lfloor k c \rfloor + 1$ and define, for each integer $d \ge 1$, the set $B_d \subset [0, 1)$ as:
\[
  B_d = \left\{ f \in [0,1) \;\middle|\;
    \begin{gathered} \text{the first } d \text{ digits of } 10^f \text{ are all in } \{0,1,2,3,4\}, \\
    \text{and the leading digit is in } \{1,2,3,4\} \end{gathered}
    \right\}.
\]
The set of starting points $x_0 \in [0,1)$ for which the relation $\{x_0 + k c\} \in B_{d(k)}$ holds for infinitely many integers $k$ has Lebesgue measure zero.
\end{theorem}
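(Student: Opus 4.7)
The plan is to recognize the statement as a direct application of Lemma~\ref{lemma:bc} to the irrational rotation $T:[0,1)\to[0,1)$, $T(x)=\{x+c\}$, which preserves Lebesgue measure $m$. Setting $A_k := B_{d(k)}$, iteration gives $T^k(x_0)=\{x_0+kc\}$, so the exceptional set in the theorem is exactly $\{x_0 : T^k x_0 \in A_k \text{ for infinitely many } k\}$. By Lemma~\ref{lemma:bc} it suffices to prove $\sum_{k=1}^{\infty} m(B_{d(k)}) < \infty$.

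The substantive step is the exponential-decay estimate foreshadowed in Section~\ref{sec:problem-statement}: $m(B_d) \le C\cdot 2^{-d}$. I would establish this via the change of variables $y=10^{f}$, under which Lebesgue measure on $[0,1)$ pushes forward to $\tfrac{dy}{y\ln 10}$ on $[1,10)$. The image of $B_d$ is the set
\[
C_d = \bigl\{\, y \in [1,10) : y = a_0.a_1a_2\ldots \text{ with } a_0 \in \{1,2,3,4\},\ a_i \in \{0,1,2,3,4\} \text{ for } 1\le i\le d-1 \bigr\},
\]
which decomposes (up to a measure-zero set of endpoints) as a disjoint union of $4\cdot 5^{d-1}$ half-open intervals, each of Lebesgue length $10^{-(d-1)}$. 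Hence $C_d$ has Lebesgue measure $4\cdot 5^{d-1}\cdot 10^{-(d-1)} = 8\cdot 2^{-d}$, and using $1/y \le 1$ on $[1,10)$ gives
\[
m(B_d) \;=\; \int_{C_d} \frac{dy}{y\ln 10} \;\le\; \frac{8}{\ln 10}\cdot 2^{-d}.
\]

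For the summation, note that $d(k)=\lfloor kc\rfloor+1 \ge kc$, so $m(B_{d(k)}) \le \frac{8}{\ln 10}\cdot 2^{-kc}$. Since $c=\log_{10}2>0$, the ratio $2^{-c}<1$ and $\sum_k (2^{-c})^{k}$ is a convergent geometric series, hence $\sum_k m(B_{d(k)}) < \infty$. Lemma~\ref{lemma:bc} then delivers the theorem.

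The main obstacle I anticipate is a careful bookkeeping of the combinatorial identification of $B_d$ with a union of digit-prefix cylinders, in particular justifying the $4\cdot 5^{d-1}$ count and handling boundary points between adjacent cylinders; once this is in place, the change of variables and the geometric-series bound are routine. Notably, the argument uses nothing about $c$ beyond measure preservation of $T$—no equidistribution or mixing rate enters—because the absolute convergence of $\sum_k m(A_k)$ is already strong enough for the dynamical Borel--Cantelli conclusion.
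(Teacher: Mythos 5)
Your proposal is correct and follows essentially the same route as the paper: the same change of variables $y=10^f$ yielding $m(B_d)\le \frac{2^{3-d}}{\ln 10}$ (your $8\cdot 2^{-d}$ is the same constant), followed by the dynamical Borel--Cantelli lemma for the rotation $T(x)=\{x+c\}$. The only cosmetic difference is in the summability step, where you bound $2^{-d(k)}\le 2^{-kc}$ and sum a geometric series in $k$ directly, whereas the paper groups the $k$'s by the value of $d(k)$ and bounds the multiplicity by $\lceil 1/c\rceil=4$; both are immediate and equivalent.
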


\begin{proof}
We connect the condition on the digits of $2^k$ to the properties of the fractional part $\{k c\}$, define the target sets $A_k = B_{d(k)}$, estimate their measure, show this measure is summable, and apply Lemma \ref{lemma:bc}.

The condition "all digits of $2^k$ are $\le 4$" is precisely equivalent to the fractional part $\{k c\}$ being an element of the set $B_{d(k)}$. The number of decimal digits of $2^k$ is $d(k) = \lfloor k c \rfloor + 1$. The sequence of digits of $2^k$ is determined by the value $10^{\{k c\}}$, which is in the interval $[1, 10)$. The set $B_{d(k)}$ represents the "allowed" values for $\{kc\}$.

\vspace{1em}
\noindent
\textbf{Bound on the Measure of $B_d$}
\\
We estimate the Lebesgue measure, $m(B_d)$. Let $y = 10^f$. The condition that the first $d$ digits of $y$ are in $\{0,1,2,3,4\}$ (with the first in $\{1,2,3,4\}$) restricts $y$ to a set of real numbers $S_d \subset [1, 5)$. The total Lebesgue length of $S_d$ is given by the sum of the lengths of the allowed intervals: $\text{length}(S_d) = 4 \cdot (5/10)^{d-1} = 4 \cdot (1/2)^{d-1} = 2^{3-d}$. Converting this length back to a measure in $f$-space using $df = \frac{dy}{y \ln(10)}$ and the bound $1/y \le 1$ for $y \in S_d$:
\[
 m(B_d) = \int_{S_d} \frac{dy}{y \ln(10)} \le \frac{\text{length}(S_d)}{\ln(10)} = \frac{2^{3-d}}{\ln(10)}.
\]
This shows that $m(B_d)$ decays exponentially as $d$ increases.

\vspace{1em}
\noindent
\textbf{Summability of Measures}
\\
Let $A_k = B_{d(k)}$. We check if $\sum_{k=1}^\infty m(A_k)$ converges. The number of $k$ values for which $d(k)$ equals a specific $d$ is bounded by $M = \lceil 1/c \rceil = 4$. (Since $d(k)=d \iff d-1 \le kc < d \iff \frac{d-1}{c} \le k < \frac{d}{c}$, an interval of length $1/c \approx 3.32$).
Thus,
\[
\sum_{k=1}^\infty m(A_k) = \sum_{d=1}^\infty \sum_{k: d(k)=d} m(B_d) \le M \sum_{d=1}^\infty m(B_d).
\]
Since $m(B_d)$ decays exponentially (like $(1/2)^d$), $\sum m(B_d)$ converges rapidly. Consequently, $\sum_{k=1}^\infty m(A_k)$ converges.

\vspace{1em}
\noindent
\textbf{Dynamical Argument}
\\
Consider the dynamical system $(X, \mathcal{B}, m, T)$ where $X=[0,1)$, $\mathcal{B}$ is the Borel $\sigma$-algebra, $m$ is the Lebesgue measure, and $T(x) = \{x+c\}$ is the irrational rotation by $c = \log_{10}(2)$.

We established that the sequence of target sets $A_k = B_{d(k)}$ satisfies $\sum_{k=1}^\infty m(A_k) < \infty$. The dynamical Borel-Cantelli lemma (Lemma \ref{lemma:bc}) directly implies that for $m$-almost every $x_0 \in X$, the set $\{ k \ge 1 : T^k x_0 \in A_k \}$ is finite. This is precisely the statement of the theorem.
\end{proof}

\begin{remark}
Theorem~\ref{thm:main} is a metric statement about starting points $x_0\in[0,1)$ for the rotation $x\mapsto \{x+c\}$. The original problem corresponds to the single orbit starting at $x_0=0$. The theorem shows that the set of $x_0$ leading to infinitely many hits is null, so for the specific rotation by $c=\log_{10}2$ one would need $x_0=0$ to lie in this null set for infinitely many solutions to occur. Establishing (or excluding) such exceptional behavior for this specific orbit is beyond the scope of our method, so the finiteness question remains open.
\end{remark}


\section{Heuristic bound}
\label{sec:bound}

While the previous section established a rigorous metric result, we can gain intuition for the original conjecture using a non-rigorous probabilistic heuristic. Assume the digits of $2^k$ behave like independent random digits chosen uniformly from $0$ to $9$. Since the probability of a single random digit being $\le 4$ is $p=5/10=1/2$, the probability that all $d$ digits of $2^k$ satisfy this condition is approximately $p^d = (1/2)^d$. We know the number of digits is $d(k) = \lfloor k c \rfloor + 1 \approx k c$, where $c = \log_{10}(2) \approx 0.3010$. Thus, the probability for a given $k$ is roughly $(1/2)^{kc} = (2^{-c})^k \approx (0.812)^k$. The expected number of positive integers $k$ satisfying the condition can be estimated by summing these probabilities (treating them as independent events):
\[
 E[\# \text{solutions } k\ge 1] \approx \sum_{k=1}^\infty (0.812)^k = \frac{0.812}{1 - 0.812} \approx 4.32.
\]
This small expected value aligns with the observation that only a few small integers are known to satisfy the condition (namely, $k=0, 1, 2, 5, 10$ for digits of $2^k \le 4$, corresponding to $n=1, 2, 3, 6, 11$ for the original even-digit problem). Although this calculation relies on unproven assumptions about digit randomness, it suggests solutions are rare, consistent with our metric finiteness proof.


\section{Computational verification}
\label{sec:comp}

We used the no-carry reformulation to drive a fast sieve. If $2^n$ has only even digits, then $2^{n-1}$ has all digits in $\{0,1,2,3,4\}$. In particular, the \emph{last $D$ digits} of $2^{n-1}$ must lie in $\{0,1,2,3,4\}$. Fix $D=54$ and let $M=10^{D}$. Writing $p=n-1$, we maintain the residue $r_p \equiv 2^p \pmod{M}$ via the recurrence $r_{p+1}\equiv 2r_p \pmod{M}$ and, at each step, check whether all $D$ base-10 digits of $r_p$ are $\le 4$. This is a necessary condition for the original property, hence it produces only false positives (never false negatives). Whenever the sieve were to pass, we would compute $2^n$ exactly and verify the full condition.

The implementation is a simple C/GMP/OpenMP program. We represent $r_p$ as three 64-bit limbs, update by a single modular multiply per step (no GMP calls in the inner loop), and parallelize by assigning each thread an arithmetic progression in the exponent with stride equal to the number of threads (precomputing $2^{\text{stride}}\bmod M$). The exponent range was partitioned into contiguous batches $[P_{\mathrm{start}},P_{\mathrm{end}}]$ with the next batch beginning at $P_{\mathrm{end}}{+}1$. Scanning $1\le n\le 10^{15}$ (equivalently $0\le p\le 10^{15}\!-\!1$), the sieve identified only the known small cases $n=1,2,3,6,11$; in particular, there were no sieve hits for $10^{13}\le n\le 10^{15}$. Since any genuine solution must pass the sieve, this rules out additional solutions in that range.


\section{Conclusion}
\label{sec:conclusion}

In summary, we established a strong metric result concerning the problem of powers of two with all even digits. By rephrasing the no-carry condition as a digit constraint and applying a dynamical Borel–Cantelli argument, we proved that the set of initial conditions in a corresponding dynamical system that would lead to infinitely many such numbers is of Lebesgue measure zero. While this does not resolve the conjecture for the specific case of base 2, it demonstrates that any potential counterexample must be exceptionally rare, depending on the deep Diophantine properties of $\log_{10}(2)$. Our method rigorously frames the problem in the context of shrinking targets and confirms the scarcity of solutions. Empirical checks confirm that for $n \le 10^{15}$, the only solutions are $n = 1, 2, 3, 6, 11$. 

\newpage

\bibliographystyle{plain}
\bibliography{p2e_11}

\end{document}